\documentclass[english]{article}
\usepackage[T1]{fontenc}
\usepackage[latin9]{inputenc}
\usepackage{geometry}
\geometry{verbose,tmargin=2cm,bmargin=2cm,lmargin=2cm,rmargin=2cm}
\usepackage{bm}
\usepackage{amsmath}
\usepackage{amsthm}
\usepackage{amssymb}
 \usepackage{color} 
 \usepackage{comment}

\makeatletter

\providecommand{\tabularnewline}{\\}

\theoremstyle{plain}
\newtheorem{thm}{\protect\theoremname}
  \theoremstyle{definition}
  \newtheorem{defn}[thm]{\protect\definitionname}
    \theoremstyle{lemma}
  \newtheorem{lem}[thm]{\protect\lemmaname}
  \theoremstyle{remark}
  \newtheorem{rem}[thm]{\protect\remarkname}
  \theoremstyle{definition}
  \newtheorem{example}[thm]{\protect\examplename}

\makeatother

\usepackage{babel}
  \providecommand{\definitionname}{Definition}
  \providecommand{\examplename}{Example}
  \providecommand{\remarkname}{Remark}
\providecommand{\theoremname}{Theorem}
\providecommand{\lemmaname}{Lemma}

\begin{document}

\title{The Tammes Problem in $\mathbb{R}^{n}$ and Linear Programming Method}

\author{Yanlu Lian
\thanks{
School of Mathematics, Hangzhou Normal University, Hangzhou, 311121, China (E-mail: yllian@hznu.edu.cn). She is the corresponding author. },
Qun Mo
\thanks{Zhejiang University, Hangzhou, 310027, China  (E-mail: moqun@zju.edu.cn).},
and Yu Xia
\thanks{School of Mathematics, Hangzhou Normal University, Hangzhou, 311121, China (E-mail: yxia@hznu.edu.cn). \protect\\
This work is supported by the NSFC grant (12271133, U21A20426, 11901143,11871481,11971427), the key project of Zhejiang Provincial Natural Science Foundation under grant number LZ23A010002 and STI2030-Major Project 2021ZD020410, the Scientific Research Fund of Zhejiang Provincial Education Department (Y202250092) and the Natural Science Foundation of Zhejiang Provincial (LQ24A010008).  }\\}
\date{}
\maketitle

\begin{abstract}
The Tammes problem delves into the optimal arrangement of $N$ points on the surface of the $n$-dimensional unit sphere (denoted as $\mathbb{S}^{n-1}$), aiming to maximize the minimum distance between any two points. In this paper, we articulate the sufficient conditions requisite for attaining the optimal value of the Tammes problem for arbitrary $n, N \in \mathbb{N}^{+}$, employing the linear programming framework pioneered by Delsarte et al. Furthermore, we showcase several illustrative examples across various dimensions $n$ and select values of $N$ that yield optimal configurations. The findings illuminate the intricate structure of optimal point distributions on spheres, thereby enriching the existing body of research in this domain.
\end{abstract}

\section{Introduction}

\subsection{The Tammes Problem}
The Tammes problem  is an important problem in the field of geometry and optimization that deals with the arrangement of points on the surface of a sphere. It was first asked by the Dutch botanist Tammes  \cite{Tammes}, while examining the distribution of openings on the pollen grains of different
flower species. The problem can be formally described as follows: Given a sphere in $\mathbb{R}^3$, the goal is to determine the optimal arrangement of $N$ points on the surface of the sphere such that the minimum distance between any two points is maximized. This is equivalent to maximizing the smallest angle between the lines connecting the center of the sphere to each of the points on its surface. Interested readers can refer
to \cite{Brass, Musion_overview} for a comprehensive overview. 

The classical Tammes problem focuses on $3$-dimensional space, and mathematically proven solutions are known for several specific values of $N$ (with $N \geq 3$, as the cases $N = 1$ and $N = 2$ are trivial). Some examples of these solutions are presented in Table \ref{Table 1} (for more examples, please refer to \cite{cohn_table}):

\begin{table}[h]
\centering
\caption{Tammes problem for specific values of $N$ in  $3$-dimensional case}
\label{Table 1}
\begin{tabular}{|c|c|}
\hline 
N & Historical works\tabularnewline
\hline 
\hline 
$3,4,6,12$ & L. Fejes Toth {[}19{]}\tabularnewline
\hline 
$5,7,8,9$ & Sch{\"u}tte and van der Waerden \cite{van de Waerden 1951}\tabularnewline
\hline 
$10,11$ & Danzer \cite{Danzer}, B{\"o}r{\"o}czky
\cite{Borozky}, L. H{\'a}rs \cite{H=00007B=00005C'a=00007D=000301rs}\tabularnewline
\hline 
$13,14$ & Musin and Tarasov \cite{Musin_13,musin_14}\tabularnewline
\hline 
24 & Robinson \cite{robinson}\tabularnewline
\hline 
\end{tabular}
\end{table}

This problem can be elegantly extended to encompass the $n$-dimensional case, allowing for a richer exploration of geometric configurations in higher-dimensional spaces. 
The goal is to maximize the minimum pairwise distance  between the $N$ points placed on the surface of the $n$-dimensional sphere:
\begin{equation}\label{tammes}
d_{n,N} = \max\left\{ \min_{1 \leq i < j \leq N} \| \boldsymbol{x}_i - \boldsymbol{x}_j \|_2\ :\ \boldsymbol{x}_i\in \mathbb{S}^{n-1},\  i=1,\ldots,N\right\},
\end{equation}
where $\mathbb{S}^{n-1}$ is the $n$-dimensional unit sphere. 
When the dimension parameter $n$ is fixed, $d_{n,N}$ can be succinctly simplified to $d_{N}$. In other words, the problem can be articulated as follows: how to optimally distribute $N$ points on the unit sphere $\mathbb{S}^{n-1}$ in such a manner that the minimum distance between any two distinct points  is maximized. The Tammes problem illustrates a fundamental aspect of spatial distribution and optimization, bridging geometry, physics, and computation. The configurations obtained from solving this problem provide insights into the natural arrangement of entities in $n$-dimensional space.


\subsection{The Kissing Number and Spherical Codes}

A closely associated problem is the kissing number problem.  Denote the kissing number in $n$-dimensional case as  $k(\mathbb{B}^n)$, where $\mathbb{B}^n$ denote the unit ball in $\mathbb{R}^n$. Formally, the kissing number problem can be mathematically represented as:
\begin{equation}\label{kissing}
k(\mathbb{B}^n) : = \max \{ N\in \mathbb{N}^{+}\ : \  \| \boldsymbol{x}_i - \boldsymbol{x}_j \|_2 \geq 2, \quad \text{for all } 1\leq i<j\leq N,\ \text{and}\ \boldsymbol{x}_i\in \mathbb{R}^n\ \text{with}\ \|\boldsymbol{x}_i\|_2=2,\  i=1,\ldots,N \}.\end{equation} It denotes the maximum number of non-overlapping unit balls $\mathbb{B}^n$ that can simultaneously contact a central unit ball $\mathbb{B}^n$ at its boundary in $\mathbb{R}^n$. The points $\boldsymbol{x}_i$ in equation (\ref{kissing}) represent the centers of the unit balls $\mathbb{B}^n$ that surrounding the central unit ball. For more knowledge about kissing number we refer to \cite{Borozky, Brass, Conway and Sloane, odlyzko, Pfender and Zregler, zong2024, zong, zong_kissing}.

In the $3$-dimensional case, this problem became the focal point of a notable discourse between Isaac Newton and David Gregory in 1694. Historical accounts suggest that Newton postulated the answer to be 12 balls, while Gregory speculated that 13 might be feasible. This intriguing conundrum is often referred to as the "{\em thirteen spheres problem}" \cite{key-19, musin_3, zong}. Ultimately, this problem was resolved by Sch{\"u}tte  and van de Waerden in 1951 \cite{van de Waerden 1951}.

The kissing number problem can be viewed as a particular instance of the spherical code design problem. For a given fixed distance $d$, the objective of the spherical code design problem is to ascertain the maximum number of points $N$ that can be positioned on the surface of a unit sphere $\mathbb{S}^{n-1}$ such that the distance between any two distinct points is at least $d$. More formally, the goal is to maximize $N$, the number of points, subject to the specified distance constraint  \cite{zong}:
\begin{equation}\label{sphere code}
     m[n,d]:=\max\{  N\in \mathbb{N}^{+} \ : \  \| \boldsymbol{x}_i - \boldsymbol{x}_j \|_2 \geq d, \quad \text{for all } 1\leq i<j\leq N,\ \text{and}\ \boldsymbol{x}_i\in \mathbb{S}^{n-1},\  i=1,\ldots,N \}.
 \end{equation}
If $N$ unit spheres are in contact with a central unit sphere in $\mathbb{R}^{n}$, then the set of kissing points forms an arrangement on the surface of the central sphere such that the Euclidean distance between any two points is at least $1$. Consequently, we can directly conclude that 
\[k(\mathbb{B}^n) = m[n, 1].\]
This observation allows us to reformulate the kissing number problem in an alternative manner by asking how many points can be positioned on the surface of $\mathbb{S}^{n-1}$ such that the distance between any pair of points is no less than $1$.  For more knowledge about spherical codes we refer to \cite{Andreev, Conway and Sloane, Delsarte, key-26, zong}

\subsection{The Relationship Between the Tammes Problem and Sphere Code}

By comparing (\ref{tammes}) with (\ref{sphere code}), we observe that the Tammes problem is intricately linked to the sphere code design problem. Specifically, for any fixed $n \in \mathbb{N}^{+}$ and $d > 0$, it follows that
\[d_{n,m[n,d]} \geq d.\]

Consequently, the thirteen spheres problem can be reformulated as a query in the context of the Tammes problem: to ascertain whether $d_{3,13} < 1$.
More specifically, if $d_{3,13} \geq 1$, then Gregory's assertion is upheld, indicating that $13$ spheres can be simultaneously tangent to a single sphere. Conversely, if $d_{3,13} < 1$, then Newton's assertion prevails, suggesting that a single sphere can be tangent to at most $12$ other spheres concurrently. This problem was examined in \cite{van de Waerden 1951}. However, the optimal value of $d_{3,13}$ was not established by Musin and Tarasov until 2012 \cite{Musin_13}. This underscores that, even when the optimal number of points $N$ is achieved in the sphere code design problem for specific values of $n$ and $d$, obtaining the optimal value of $d_{n,N}$ in the context of the Tammes problem remains  more challenging.

\subsection{Organization of the Paper}
In this paper, we focus on establishing conditions to ascertain the value of $d_{n,N}$ for general $n$ and $N$ through the utilization of linear programming techniques, originally introduced by Delsarte, Goethals, and Seidel \cite[Theorem 4.3 and Theorem 5.10]{Delsarte}. In Section \ref{sec: linear}, we present the linear programming framework, where Lemma \ref{Thm1} plays a pivotal role in the main theorem. Section \ref{sec: main} delineates the sufficient conditions for determining $d_{n,N}$ as articulated in Theorem \ref{thm: main_theorem1}. Furthermore, we provide illustrative examples that yield optimal arrangements of point sets for the Tammes problem for specific choices of $n$ and $N$, employing auxiliary functions that satisfy the conditions outlined in Theorem \ref{thm: main_theorem1}. Further details can be found in Section \ref{sec: example}.

\section{The Linear Programming
Method }\label{sec: linear}
Let us revisit the mathematical formulation of the Tammes problem. Denote $C = \{\boldsymbol{x}_1, \boldsymbol{x}_2, \ldots, \boldsymbol{x}_N\} \subset \mathbb{S}^{n-1}$ as a specific set of $N$ points on the unit sphere $\mathbb{S}^{n-1}$. Define $d_{C}$ and $t_{C}$ as follows:
\begin{equation}\label{eq: dc_tc}
d_{C} := \min_{1 \leq i < j \leq N} \|\boldsymbol{x}_{i} - \boldsymbol{x}_{j}\|_{2}, \quad \text{and} \quad t_{C} := \max_{1 \leq i < j \leq N} \boldsymbol{x}_{i} \cdot \boldsymbol{x}_{j},
\end{equation}
where $\boldsymbol{x}_{i} \cdot \boldsymbol{x}_{j}$ represents the inner product of the vectors $\boldsymbol{x}_{i}$ and $\boldsymbol{x}_{j}$. 
We can directly elucidate the relationship between $d_C$ and $t_C$:  \begin{equation} t_{C} = 1 - \frac{1}{2} d_{C}^{2}\label{eq:t_c and d_c}, \end{equation}
as 
$\|\boldsymbol{x}_{i}-\boldsymbol{x}_{j}\|_{2}^{2} = (\boldsymbol{x}_{i}-\boldsymbol{x}_{j}) \cdot (\boldsymbol{x}_{i}-\boldsymbol{x}_{j}) = \boldsymbol{x}_{i} \cdot \boldsymbol{x}_{i} - 2 \boldsymbol{x}_{i} \cdot \boldsymbol{x}_{j} + \boldsymbol{x}_{j} \cdot \boldsymbol{x}_{j} = 2 - 2 \boldsymbol{x}_{i} \cdot \boldsymbol{x}_{j},$
for any $\boldsymbol{x}_{i}, \boldsymbol{x}_{j} \in \mathbb{S}^{n-1}$. 
Thus the objective of the Tammes problem in the $n$-dimensional case is to maximize $d_C$ across all configurations of $C$ consisting of $N$ distinct points on the sphere $\mathbb{S}^{n-1}$. Consequently, the maximum distance $d_{n,N}$, as designated in (\ref{tammes}), can be expressed as follows: \begin{equation} d_{n,N} = \max_{C \in \mathcal{C}} d_C=\max_{C\in \mathcal{C}}\sqrt{2-2t_{C}}, \label{eq:d_n,N}\end{equation} where $\mathcal{C}$ represents the collection of all point sets $C$ comprising $N$ distinct points on the surface of the sphere $\mathbb{S}^{n-1}$.

To date, for the case when $n=3$, optimal solutions to the classical Tammes problem have been rigorously established only for relatively small instances, specifically for values of $N$ ranging from $1$ to $14$ and $24$. These solutions were primarily derived through constructive methods grounded in spherical geometry. For larger instances, many researchers have shifted their focus toward the pursuit of high-quality suboptimal solutions rather than establishing the optimality of the solutions. In addition to the constructive techniques, various numerical global optimization methods have been proposed \cite{Lai, Mackay}. The methodologies employed encompass both construction approaches informed by prior knowledge of the problem and numerical global optimization strategies.

In this paper, we concentrate on deriving the optimal solution for the Tammes problem through the application of linear programming techniques. The linear programming approach for deriving bounds on spherical codes was developed in parallel with the analogous methodology for codes over finite fields, pioneered by Delsarte \cite{key-13}. In 1977, Delsarte, Goethals, and Seidel \cite{Delsarte} established the principal theorem, which was subsequently generalized by Kabatianskii and Levenshtein \cite{Kabatyanskii} in 1978.
The Gegenbauer polynomials are instrumental in the linear programming method. Numerous definitions for these polynomials have been proposed \cite{Conway and Sloane,Delsarte,Pfender and Zregler}. For a given dimension $n$, they can be characterized by the following recurrence relations: $P_{0}^{(n)}(t) = 1$, $P_{1}^{(n)}(t) = t$, and
\[
P_{k+1}^{(n)}(t)=\frac{(2k+n-2)tP_{k}^{(n)}(t)-kP_{k-1}^{(n)}(t)}{k+n-2},\qquad\text{for}\ k\geq1,
\]
As observed, $P_{k}^{(n)}$ represents a special case of the Jacobi polynomial $P_{k}^{(\alpha,\beta)}$ with parameters $\alpha = \beta = \frac{n-3}{2}$.

We hereby define the set $P(k, \tau, n)$ in terms of the Gegenbauer polynomials, where $\tau \in [-1, 1)$.
\begin{defn}
\label{def2}Let $P(k, \tau, n)$ denote the set defined as follows: for any fixed $k$, $n$, and $\tau \in [-1, 1)$, a function $f$ belongs to $P(k, \tau, n)$ if and only if it satisfies the following two properties:

(i) $f(t) = \sum_{i=0}^{k} c_{i} P_{i}^{(n)}(t)$, where $P_{i}^{(n)}$ (for $i = 0, \ldots, k$) denotes the $i$-th degree Gegenbauer polynomials in the context of $n$-dimensional space. The coefficients $\{c_{i}\}_{i=0}^{k}$ must satisfy $c_{0} > 0$ and $c_{i} \geq 0$ for $i = 1, \ldots, k$;

(ii) $f(t) \leq 0$ for $t \in [-1, \tau].$

Additionally, let $f^{\#}$ be defined as follows: 
\begin{equation} 
f^{\#} := \frac{f(1)}{c_{0}}.\label{eq:f=000023} 
\end{equation}\end{defn}

The following lemma plays a crucial role in the proof of the main theorem.
\begin{lem}
\label{Thm1}Consider a fixed point set $C = \{ \boldsymbol{x}_{1}, \ldots, \boldsymbol{x}_{N} \} \subset \mathbb{S}^{n-1}$, with $t_{C}$ defined in (\ref{eq: dc_tc}). If there exists a function $f$ such that $f \in P(K, t_{C}, n)$ for some degree parameter $K$, then it follows that
\[
N\leq f^{\#},
\]
where $f^{\#}$ is defined in (\ref{eq:f=000023}). Moreover, if $N=f^{\#}$,
we have 

\[
f(\boldsymbol{x}_{i}\bm{\cdot}\boldsymbol{x}_{j})=0,
\]
for all $1\leq i<j\leq N$.
\end{lem}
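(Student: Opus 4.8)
The plan is to run the classical Delsarte--Goethals--Seidel double-counting argument on the quantity
\[
S := \sum_{i=1}^{N}\sum_{j=1}^{N} f(\boldsymbol{x}_{i}\bm{\cdot}\boldsymbol{x}_{j}),
\]
bounding $S$ from below via the Gegenbauer expansion of $f$ and from above via the negativity property (ii). The crucial input is the positive-semidefiniteness of the Gegenbauer polynomials: for every $k\geq 1$ and every finite configuration on $\mathbb{S}^{n-1}$ one has $\sum_{i,j} P_{k}^{(n)}(\boldsymbol{x}_{i}\bm{\cdot}\boldsymbol{x}_{j})\geq 0$. This follows from the addition formula relating $P_{k}^{(n)}$ to spherical harmonics and is exactly the property furnished by the framework of Delsarte et al.\ cited in the introduction, so I would invoke it directly rather than reprove it.

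For the lower bound I would substitute $f=\sum_{k=0}^{K} c_{k}P_{k}^{(n)}$ into $S$ and interchange the finite sums to get $S=\sum_{k=0}^{K} c_{k}\bigl(\sum_{i,j} P_{k}^{(n)}(\boldsymbol{x}_{i}\bm{\cdot}\boldsymbol{x}_{j})\bigr)$. The $k=0$ term contributes $c_{0}N^{2}$ since $P_{0}^{(n)}\equiv 1$, while every term with $k\geq 1$ is nonnegative because $c_{k}\geq 0$ and each Gegenbauer Gram sum is $\geq 0$. Hence $S\geq c_{0}N^{2}$.

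For the upper bound I would split $S$ into diagonal and off-diagonal parts. The $N$ diagonal terms each equal $f(1)$ because $\boldsymbol{x}_{i}\bm{\cdot}\boldsymbol{x}_{i}=1$. For $i\neq j$ we have $-1\leq \boldsymbol{x}_{i}\bm{\cdot}\boldsymbol{x}_{j}\leq t_{C}$ by the definition of $t_{C}$, so property (ii) gives $f(\boldsymbol{x}_{i}\bm{\cdot}\boldsymbol{x}_{j})\leq 0$; thus $S\leq N f(1)$. Combining, $c_{0}N^{2}\leq S\leq N f(1)$, and dividing by $N c_{0}>0$ (here $c_{0}>0$ is essential) yields $N\leq f(1)/c_{0}=f^{\#}$.

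Finally, for the equality case, $N=f^{\#}$ forces $c_{0}N^{2}=N f(1)$, so both displayed inequalities collapse to equalities. In particular the off-diagonal contribution $\sum_{i\neq j} f(\boldsymbol{x}_{i}\bm{\cdot}\boldsymbol{x}_{j})$ must vanish; being a sum of nonpositive terms, every summand is zero, giving $f(\boldsymbol{x}_{i}\bm{\cdot}\boldsymbol{x}_{j})=0$ for all $1\leq i<j\leq N$. The only genuine obstacle lies not in this elementary bookkeeping but in the positivity $\sum_{i,j} P_{k}^{(n)}(\boldsymbol{x}_{i}\bm{\cdot}\boldsymbol{x}_{j})\geq 0$, which underpins the entire argument and which I treat as an available consequence of the cited Delsarte--Goethals--Seidel theory.
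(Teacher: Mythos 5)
Your proposal is correct and follows essentially the same double-counting argument as the paper: bounding $S=\sum_{i,j}f(\boldsymbol{x}_i\bm{\cdot}\boldsymbol{x}_j)$ below by $N^2c_0$ via the positive definiteness of the Gegenbauer Gram sums (which the paper likewise invokes by citation rather than reproving) and above by $Nf(1)$ via the nonpositivity of $f$ on $[-1,t_C]$, then extracting the equality case from the collapse of both inequalities. No gaps; the treatment matches the paper's proof step for step.
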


\begin{proof}
Define
\[
S:=\sum_{i,j=1}^{N}f(\boldsymbol{x}_{i}\bm{\cdot}\boldsymbol{x}_{j}).
\]
Given that  $f\in P(K,t_{C},n)$, as per Definition \ref{def2}, it follows that 
\begin{equation}
S=\sum_{i,j=1}^{N}\sum_{l=0}^{K}c_{l}P_{l}^{(n)}(\boldsymbol{x}_{i}\bm{\cdot}\boldsymbol{x}_{j})=N^{2}c_{0}+\sum_{i,j=1}^{N}\sum_{l=1}^{K}c_{l}P_{l}^{(n)}(\boldsymbol{x}_{i}\bm{\cdot}\boldsymbol{x}_{j}).\label{eq: S_equ}
\end{equation}
Introduce
\[
Q=\sum_{i,j=1}^{N}\sum_{l=1}^{K}c_{l}P_{l}^{(n)}(\boldsymbol{x}_{i}\bm{\cdot}\boldsymbol{x}_{j})=\sum_{l=1}^{K}c_{l}\sum_{i,j=1}^{N}P_{l}^{(n)}(\boldsymbol{x}_{i}\bm{\cdot}\boldsymbol{x}_{j}).
\]
Utilizing the positive definiteness property of Gegenbauer polynomials (as established in \cite[Lemma 1]{musin_3} and \cite[Page 126]{zong}), along with the non-negativity condition $c_{l} \geq 0$ for $l = 1, \ldots, K$, it can be concluded that
\begin{equation}
Q\geq0.\label{(1)}
\end{equation}
Substituting (\ref{(1)}) into (\ref{eq: S_equ}) yields the inequality
\begin{equation}
S\geq N^{2}c_{0}.\label{(2)}
\end{equation}

Conversely, $S$ can be expressed as
\begin{equation}
S=\sum_{i,j=1}^{N}f(\boldsymbol{x}_{i}\bm{\cdot}\boldsymbol{x}_{j})=\sum_{i=1}^{N}f(\boldsymbol{x}_{i}\bm{\cdot}\boldsymbol{x}_{i})+\sum_{\underset{i\neq j}{i,j=1}}^{N}f(\boldsymbol{x}_{i}\bm{\cdot}\boldsymbol{x}_{j})=N\cdot f(1)+\sum_{\underset{i\neq j}{i,j=1}}^{N}f(\boldsymbol{x}_{i}\bm{\cdot}\boldsymbol{x}_{j}).\label{eq:another_S}
\end{equation}
Given that $f(t) \leq 0$ for $t \in [-1, t_{C}]$ and considering that $-1 \leq \boldsymbol{x}_{i} \cdot \boldsymbol{x}_{j} \leq t_{C}$ for $1 \leq i < j \leq N$, it follows that
\begin{equation}
\sum_{\underset{i\neq j}{i,j=1}}^{N}f(\boldsymbol{x}_{i}\bm{\cdot}\boldsymbol{x}_{j})\leq0.\label{(3)}
\end{equation}
This leads to the conclusion that 
\begin{equation}
S\leq N\cdot f(1).\label{(4)}
\end{equation}
By combining inequalities (\ref{(2)}) and (\ref{(4)}), one can deduce that
\[
N\leq\frac{f(1)}{c_{0}}=f^{\#}.
\]

Moreover, if $N=f^{\#}=\frac{f(1)}{c_{0}}$, we have $N^{2}c_{0}=Nf(1)$.
Based on the representation of $S$ in (\ref{eq: S_equ}) and (\ref{eq:another_S}),
it can be immediately inferred that
\[
\sum_{i,j=1}^{N}\sum_{l=1}^{k}c_{l}P_{l}^{(n)}(\boldsymbol{x}_{i}\bm{\cdot}\boldsymbol{x}_{j})=\sum_{\underset{i\neq j}{i,j=1}}^{N}f(\boldsymbol{x}_{i}\bm{\cdot}\boldsymbol{x}_{j}).
\]
Then according to (\ref{(1)}) and (\ref{(3)}), it follows that 
\[
\sum_{\underset{i\neq j}{i,j=1}}^{N}f(\boldsymbol{x}_{i}\bm{\cdot}\boldsymbol{x}_{j})=0.
\]
Since $f(t)\leq0$ for $t\in[-1,t_{C}]$, we can assert that
\[
f(\boldsymbol{x}_{i}\bm{\cdot}\boldsymbol{x}_{j})=0,
\]
for all indices where $1 \leq i < j \leq N$.
\end{proof}


\section{Main Theorem}\label{sec: main}

In this section, we elucidate the principal findings derived from the application of linear programming techniques.

\begin{thm}
\label{thm: main_theorem1}Let $C = \{\boldsymbol{x}_{1}, \ldots, \boldsymbol{x}_{N}\} \subset \mathbb{S}^{n-1}$ be a fixed point set, with $t_{C}$ and $d_{C}$ defined as in (\ref{eq: dc_tc}).
If for the set $C$ satisfies the following three conditions:

(i) There exists a function $f \in P(k_1, t_{C}, n)$ such that $N = f^{\#}$;


(ii) There exists $t_{2} \in [-1, t_{C})$ such that $f(t) \neq 0$ for $t \in (t_{2}, t_{C})$;

(iii) There exists a function $g \in P(K_{2}, t_{2}, n)$ such that $N > g^{\#}$.

then the set $C$ constitutes a solution to the Tammes problem for the parameters $n$ and $N$, and we have
\[
d_{n,N}=d_{C}.
\]
\end{thm}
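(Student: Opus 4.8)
The plan is to show that the fixed configuration $C$ attains the maximum in (\ref{eq:d_n,N}); since $C \in \mathcal{C}$ we trivially have $d_{n,N} \geq d_C$, so the whole content is the reverse inequality. Because of the monotone relationship $d_{C'} = \sqrt{2 - 2 t_{C'}}$ from (\ref{eq:t_c and d_c}), maximizing $d_{C'}$ is the same as minimizing $t_{C'}$; hence it suffices to prove that every $N$-point configuration $C' = \{\boldsymbol{x}_{1}', \ldots, \boldsymbol{x}_{N}'\} \subset \mathbb{S}^{n-1}$ satisfies $t_{C'} \geq t_{C}$. I would argue by contradiction, assuming some $C'$ has $t_{C'} < t_{C}$, and split into two cases according to the position of $t_{C'}$ relative to the threshold $t_{2}$ supplied by conditions (ii) and (iii). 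Since $t_{2} \in [-1, t_{C})$, the two ranges $[-1, t_{2}]$ and $(t_{2}, t_{C})$ exhaust all possible values of a hypothetical $t_{C'} < t_{C}$.

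\emph{Far region.} Suppose $t_{C'} \leq t_{2}$. The defining properties of $P(K_{2}, t_{2}, n)$ are inherited on any smaller interval: since $g(t) \leq 0$ on $[-1, t_{2}] \supseteq [-1, t_{C'}]$ and the coefficient conditions $c_{0} > 0$, $c_{i} \geq 0$ are intrinsic to $g$ (independent of the threshold), we obtain $g \in P(K_{2}, t_{C'}, n)$. Applying Lemma \ref{Thm1} to $C'$ and $g$ yields $N \leq g^{\#}$, which contradicts condition (iii). Hence no configuration can have $t_{C'} \leq t_{2}$.

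\emph{Near region.} Suppose $t_{2} < t_{C'} < t_{C}$. Exactly as above, $f(t) \leq 0$ on $[-1, t_{C}] \supseteq [-1, t_{C'}]$, so $f \in P(k_{1}, t_{C'}, n)$, and applying Lemma \ref{Thm1} to $C'$ and $f$ gives $N \leq f^{\#} = N$ by condition (i). This is precisely the equality case of Lemma \ref{Thm1}, which forces $f(\boldsymbol{x}_{i}' \bm{\cdot} \boldsymbol{x}_{j}') = 0$ for every pair $i < j$. However, the maximum $t_{C'} = \max_{i<j} \boldsymbol{x}_{i}' \bm{\cdot} \boldsymbol{x}_{j}'$ is attained by some pair, and $t_{C'} \in (t_{2}, t_{C})$, so condition (ii) gives $f(t_{C'}) \neq 0$ --- a direct contradiction. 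Combining the two regions rules out $t_{C'} < t_{C}$ entirely, so $t_{C'} \geq t_{C}$ for all $C'$, and therefore $d_{n,N} = d_{C}$.

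The conceptual crux --- the step I expect to carry the weight --- is the near-region argument: one must notice that $f$ is simultaneously \emph{forced} to vanish at every realized inner product (via the equality clause of Lemma \ref{Thm1}) yet \emph{forbidden} from vanishing anywhere on the open interval $(t_{2}, t_{C})$ by the non-degeneracy condition (ii). The role of $g$ together with the strict inequality in (iii) is exactly to seal off the complementary range $[-1, t_{2}]$, where $f$ is permitted to have zeros and so cannot by itself exclude an improved configuration; thus the two auxiliary functions jointly cover all of $[-1, t_{C})$. The only claims I would verify explicitly are the routine inheritance facts --- that membership $f \in P(\cdot, t_{C'}, n)$ and $g \in P(\cdot, t_{C'}, n)$ persists when the threshold shrinks, and that $f^{\#}$, $g^{\#}$ do not depend on the threshold --- all of which are immediate from Definition \ref{def2}.
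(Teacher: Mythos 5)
Your proposal is correct and follows essentially the same route as the paper: contradiction via $t_{C'} < t_{C}$, the same case split at $t_{2}$, Lemma \ref{Thm1} applied to $g$ in the far region, and the equality clause of Lemma \ref{Thm1} applied to $f$ at the pair realizing $t_{C'}$ in the near region. No substantive differences.
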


\begin{proof}[\textbf{Proof of Theorem \ref{thm: main_theorem1}}]

Assume the existence of another point set $S = \{ \widetilde{\boldsymbol{x}}_{1}, \ldots, \widetilde{\boldsymbol{x}}_{N} \} \subset \mathbb{S}^{n-1}$, with $d_{S}$ and $t_{S}$ defined as in (\ref{eq: dc_tc}). If it is the case that $d_{S} > d_{C}$, then, consistent with (\ref{eq:t_c and d_c}), we ascertain
\begin{equation}
t_{S} < t_{C}. \label{eq:1(1)_temp}
\end{equation}
To analyze this, we will delineate our examination into two distinct scenarios based on conditions (ii) and (iii): $t_{S} \leq t_{2}$ and $t_{S} > t_{2}$.

\textbf{Case 1: $t_{S} \leq t_{2}$.}
In this scenario, we can directly infer that 
\[g \in P(K_{2}, t_{2}, n) \subseteq P(K_{2}, t_{S}, n).\] By invoking Lemma \ref{Thm1}, we find $N \leq g^{\#}$, which contradicts condition (iii).

\textbf{Case 2: $t_{S} > t_{2}$.}
 Based on condition
(ii), taking $\widetilde{\boldsymbol{x}}_{k_{1}}$ and $\widetilde{\boldsymbol{x}}_{k_{2}}$
such that $\widetilde{\boldsymbol{x}}_{k_{1}}\bm{\cdot}\widetilde{\boldsymbol{x}}_{k_{2}}=t_{S}$,
we have 
\begin{equation}
f(\widetilde{\boldsymbol{x}}_{k_{1}}\bm{\cdot}\widetilde{\boldsymbol{x}}_{k_{2}})\neq 0.\label{eq:f_k}
\end{equation}
However, considering condition (i) and the fact that $t_{S} < t_{C}$, it follows that \[f \in P(k_1, t_{C}, n) \subseteq P(k_1, t_{S}, n).\] Furthermore, given that $N = f^{\#}$ as stated in condition (i) and applying Lemma \ref{Thm1}, we can deduce
that \[f(\widetilde{\boldsymbol{x}}_{k_{1}}\bm{\cdot}\widetilde{\boldsymbol{x}}_{k_{2}})=0.\]
It contradicts with (\ref{eq:f_k}). 

Therefore, for any point set $S=\{\widetilde{\boldsymbol{x}}_{1},\cdots,\widetilde{\boldsymbol{x}}_{N}\}\subset\mathbb{S}^{n-1}$,
we have $d_{S}\leq d_{C}$ and therefore
\[
d_{n,N}=d_{C}.
\]
\end{proof}

\begin{rem}

Indeed, condition (ii) and (iii) indicate that $f$ has multiple zero points in addition to $t = t_{C}$, with $t_{2}$ representing the largest zero point in $[-1, t_C)$.
\end{rem}

\section{Examples and Discussions}\label{sec: example}

In this section, we address the Tammes problem for specific selections of $n$ and $N$, deriving the exact values of $d_{n,N}$. To ensure the completeness of this paper, we present the proofs of these results utilizing Theorem \ref{thm: main_theorem1}.

\begin{example}
The distance $d_{n,2n}$ is given by $d_{n,2n} = \sqrt{2}$, and a point set solution is represented by the $n$-dimensional cross polytope
$C = \{\pm \boldsymbol{e}_{1}, \ldots, \pm \boldsymbol{e}_{n}\},$
where $\boldsymbol{e}_{1}, \ldots, \boldsymbol{e}_{n}$ form an orthonormal basis of $\mathbb{R}^{n}$.
\end{example}

\begin{proof}
When $n = 1$, the result can be established trivially. Henceforth, attention is restricted to the case where $n \geq 2$.
According to the definition of the set $C$, it follows that $|C| = N = 2n$, $d_{C} = \sqrt{2}$, and $t_{C} = 0$. Consider the function
\[f(t) = t(t + 1).\]
Through direct computation, it can be shown that $f \in P(2, 0, n)$ and $f(t) \leq 0$ for $t \in [-1, t_{C}]$. Moreover, it holds that
\[f^{\#} = \frac{f(1)}{c_{0}} = \frac{2}{\frac{1}{n}} = 2n = N.\]
Defining $t_{2} = -1$ leads to
\[g(t) = t + 1.\]
It is evident that $f(t) \neq 0$ for $t \in (t_{2}, t_{C})$, and $g(t) \in P(1, -1, n)$ such that
\[N = 2n > g^{\#} = 2,\]
given that $n \geq 2$. Consequently, the function $f$ satisfies conditions (i) and (ii), while $g$ meets condition (iii) as outlined in Theorem \ref{thm: main_theorem1}. Therefore, it can be concluded that
\[d_{n,2n} = d_{C} = \sqrt{2 - 2t_{C}} = \sqrt{2}.\]\end{proof}
\begin{example}
The distance $d_{3,12}$ is given by
$d_{3,12} = \frac{4}{\sqrt{10 + 2\sqrt{5}}}, $
and a point set solution is represented by
$C = \{\text{the vertices of a regular icosahedron}\} \subset \mathbb{S}^{2}.$
\end{example}

\begin{proof}
According to the definition of the set $C$, it follows that $|C| = N = 12$. Furthermore, three distinct values are found in the set
\[\{\|\boldsymbol{x}_{i} - \boldsymbol{x}_{j}\|_{2} : \boldsymbol{x}_{i}, \boldsymbol{x}_{j} \in C \text{ and } \boldsymbol{x}_{i} \neq \boldsymbol{x}_{j}\}.\]
These distances are expressed as
\[d_{C} = d_{1} = \frac{4}{\sqrt{10 + 2\sqrt{5}}}, \quad d_{2} = \frac{1}{5}\sqrt{10(5 + \sqrt{5})}, \quad \text{and} \quad d_{3} = 2.\]
Define
\[t_{C} = t_{1} = \frac{\sqrt{5}}{5}, \quad t_{2} = -\frac{\sqrt{5}}{5}, \quad \text{and} \quad t_{3} = -1.\]
The function $f$ can be constructed as
\begin{align*}
  f(t) &= (t + 1)\left(t + \frac{\sqrt{5}}{5}\right)^{2}\left(t - \frac{\sqrt{5}}{5}\right)\\
  &= t^{4} + \frac{\sqrt{5} + 5}{5}t^{3} + \frac{\sqrt{5} - 1}{5}t^{2} + \frac{-\sqrt{5} - 5}{5}t - \frac{\sqrt{5}}{25}\\
  &=\frac{8}{35}P_4^{(3)}+\frac{10+2\sqrt{5}}{25}P_3^{(3)}+\frac{46+14\sqrt{5}}{105}P_2^{(3)}+\frac{40+8\sqrt{5}}{25}P_1^{(3)}+\frac{10+2\sqrt{5}}{75}P_0^{(3)}.  
\end{align*}
Through direct calculation, it can be verified that $f \in P(4, \frac{\sqrt{5}}{5}, 3)$ and $f(t) \leq 0$ for $t \in [-1, t_{C}]$. Additionally,
\[f^{\#} = \frac{f(1)}{c_{0}} = \frac{2\left(1 + \frac{\sqrt{5}}{5}\right)^{2}\left(1 - \frac{\sqrt{5}}{5}\right)}{\frac{2\sqrt{5} + 10}{75}} = 12 = N.\]
Moreover, it holds that $f(t) < 0$ for $t \in (t_{2}, t_{C})$.
Next, consider the function
\begin{align*}
 g(t) &= (t + 1)\left(t + \frac{\sqrt{5}}{5}\right)\\
 &= t^{2} + \frac{5 + \sqrt{5}}{5}t + \frac{\sqrt{5}}{5}\\
 &=\frac{2}{3}P_2^{(3)}+\frac{5+\sqrt{5}}{5}P_1^{(3)}+\frac{5+3\sqrt{5}}{15}P_0^{(3)}.   
\end{align*}
It follows that $g(t) \in P(2, -\frac{\sqrt{5}}{5}, 3)$ and
\[N = 12 > g^{\#} = \frac{12}{5}\sqrt{5}.\]
Consequently, the function $f$ satisfies conditions (i) and (ii), while $g$ fulfills condition (iii) as stated in Theorem \ref{thm: main_theorem1}. Therefore, it can be concluded that
\[d_{3,12} = d_{C} = \sqrt{2 - 2t_{C}} = \frac{4}{\sqrt{10 + 2\sqrt{5}}}.\]
\end{proof}

\begin{example}
The distance $d_{4,120}$ is given by
$d_{4,120} = \frac{\sqrt{5} - 1}{2},$
and a point set solution is represented by
$C = \{\text{the vertices of a 600-cell}\} \subset \mathbb{S}^{3}.$
\end{example}

\begin{proof}
The $600$-cell is a finite regular $4$-dimensional polytope with
120 vertices and 720 edges. Therefore, $|C|=N=120$. Additionally, there exist 8 distinct values in the setz
\[\{\|\boldsymbol{x}_{i} - \boldsymbol{x}_{j}\|_{2} : \boldsymbol{x}_{i}, \boldsymbol{x}_{j} \in C \text{ and } \boldsymbol{x}_{i} \neq \boldsymbol{x}_{j}\}.\]
These distances are given by
\[d_{C} = d_{1} = \frac{\sqrt{5} - 1}{2}, \ d_{2} = 1, \quad d_{3} = \frac{\sqrt{2(-\sqrt{5} + 5)}}{2}, \ d_{4} = \sqrt{2}, \  d_{5} = \frac{\sqrt{5} + 1}{2}, \  d_{6} = \sqrt{3}, \  d_{7} = \frac{\sqrt{2(\sqrt{5} + 5)}}{2}, \  d_{8} = 2.\]
Define the parameters as
\[t_{C} = t_{1} = \frac{\sqrt{5} + 1}{4}, \quad t_{2} = \frac{1}{2}, \quad t_{3} = \frac{\sqrt{5} - 1}{4}, \quad t_{4} = 0, \quad t_{5} = -\frac{\sqrt{5} - 1}{4}, \quad t_{6} = -\frac{1}{2}, \quad t_{7} = -\frac{\sqrt{5} + 1}{4}, \quad t_{8} = -1.\]
The function $f$ can be constructed as
\begin{align*}
    f(t) &= 330825728(t + 1)^{2}t^{2}(t^{2} - \frac{1}{4})^{2}(t^{2} - \frac{3 - \sqrt{5}}{8})^{2}(t + \frac{1 + \sqrt{5}}{4})^{2}(t - \frac{1 + \sqrt{5}}{4})(t^{2} - \frac{9023 + 682\sqrt{5}}{5048}t + \frac{15649 + 3121\sqrt{5}}{20192})\\
    &= 45432P_{17}^{(4)}+(39695+9860\sqrt{5})P_{16}^{(4)}+(74208+4640\sqrt{5})P_{15}^{(4)} + (67485-870\sqrt{5})P_{14}^{(4)}+(-54120+38280\sqrt{5})P_{11}^{(4)}\\
    &+(34342+103356\sqrt{5})P_{10}^{(4)}+(155020+211700\sqrt{5})P_{9}^{(4)}+(355338+335124\sqrt{5})P_{8}^{(4)}+(561568+445440\sqrt{5})P_7^{(4)}\\
    &+(693868+515214\sqrt{5})P_6^{(4)}+(735888+518520\sqrt{5})P_5^{(4)}+(650135+457330\sqrt{5})P_4^{(4)}+(509144+346840\sqrt{5})P_3^{(4)}\\
    &+(304377+222546\sqrt{5})P_2^{(4)}+(154460+104980\sqrt{5})P_1^{(4)}+(36360+27840\sqrt{5})P_0^{(4)}.
\end{align*}
Through direct computation, it can be confirmed that $f \in P(17, \frac{\sqrt{5} + 1}{4}, 4)$ and $f(t) \leq 0$ for $t \in [-1, t_{C}]$. Furthermore,
\[f^{\#} = \frac{f(1)}{c_{0}} = \frac{f(1)}{36360+27840\sqrt{5}} = 120 = N.\]
Additionally, it holds that $f(t) < 0$ for $t \in (t_{2}, t_{C})$.
Next, consider the function
\begin{align*}
g(t) &= 330825728(t + 1)^{2}t^{2}(t + \frac{1}{2})^{2}(t^{2} - \frac{3 - \sqrt{5}}{8})^{2}(t + \frac{1 + \sqrt{5}}{4})^{2}(t - \frac{1}{2})\\
    &=565376P_{13}^{(4)}+(3149952 + 524992\sqrt{5})P_{12}^{(4)}+(10903680 + 3149952\sqrt{5})P_{11}^{(4)}+(29540896 + 10439264\sqrt{5})P_{10}^{(4)}\\
    &+(65422080 + 25441920\sqrt{5})P_{9}^{(4)}
    +(121939488 + 49975200\sqrt{5})P_{8}^{(4)}+(195135488 + 82383360\sqrt{5})P_7^{(4)}\\
    &+(270956448 + 116326112\sqrt{5})P_6^{(4)}
    +(327231552+141868992\sqrt{5})P_5^{(4)}+(341648640 + 149016960\sqrt{5})P_4^{(4)}\\
    &+(302799232 + 132540288\sqrt{5})P_3^{(4)}
    +(218376480 + 95770656\sqrt{5})P_2^{(4)}+(115619392 + 50762688\sqrt{5})P_1^{(4)}\\
    &+(32064896 + 14094016\sqrt{5})P_0^{(4)}.
\end{align*}

It follows that $g(t) \in P(13, \frac{1}{2}, 4)$ and
\[N = 120 > g^{\#} = \frac{g(1)}{32064896 + 14094016\sqrt{5}} = \frac{7200}{21431}(323 - 61\sqrt{5}).\]
Thus, the function $f$ satisfies conditions (i) and (ii), while $g$ meets condition (iii) as outlined in Theorem \ref{thm: main_theorem1}. Consequently, it can be concluded that
\[d_{4,120} = d_{C} = \sqrt{2 - 2t_{C}} = \frac{\sqrt{5} - 1}{2}.\]\end{proof}

\begin{rem}
The computations of the Gegenbauer polynomials expansion 
can be made with the help of the computer software product Maple.
\end{rem}

\end{document}